\newcommand{\C}{ \mathbb{C} }
\newcommand{\Q}{ \mathbb{Q} }
\newcommand{\Z}{ \mathbb{Z} }
\newcommand{\N}{ \mathbb{N} }
\newcommand{\F}{ \mathbb{F} }
\newcommand{\dra}{\dashrightarrow}
\newcommand{\lra}{\longrightarrow}
\newcommand{\bG}{\mathbb{G}}
\newcommand{\bZ}{\mathbb{Z}}
\theoremstyle{plain}
	\newtheorem{thm}{Theorem}
		\numberwithin{thm}{section}
	\newtheorem{lemma}[thm]{Lemma}
	\newtheorem{prop}[thm]{Proposition}
	\newtheorem{cor}[thm]{Corollary}
	\newtheorem*{thm*}{Theorem}
	\newtheorem*{lemma*}{Lemma}
	\newtheorem*{prop*}{Proposition}
	\newtheorem*{cor*}{Corollary}
	\newtheorem*{conj*}{Conjecture}
\theoremstyle{definition}
	\newtheorem{example}[thm]{Example}
	\newtheorem*{example*}{Example}
	\newtheorem{defn}[thm]{Definition}
	\newtheorem{remark}[thm]{Remark}
\begin{document}

\title[Orbit intersections in semiabelian varieties]{Intersections of orbits of self-maps with subgroups in semiabelian varieties}

\author{Jason Bell}
\address{University of Waterloo \\
Department of Pure Mathematics \\
Waterloo, Ontario \\
N2L 3G1, Canada}
\email{jpbell@uwaterloo.ca}

\author{Dragos Ghioca}
\address{University of British Columbia\\
Department of Mathematics\\
Vancouver, BC\\
V6T 1Z2, Canada}
\email{dghioca@math.ubc.ca}

\keywords{Semiabelian varieties, Dynamical Mordell-Lang Conjecture, rational maps}
\subjclass[2020]{Primary: 14K12. Secondary: 37P55}

\thanks{The authors were partially supported by Discovery Grants from the National Science and Engineering Research Council of Canada.}

\begin{abstract}
Let $G$ be a semiabelian variety defined over an algebraically closed field $K$,  endowed with a rational self-map $\Phi$. Let $\alpha\in G(K)$ and let $\Gamma\subseteq G(K)$ be a finitely generated subgroup. We show that the set $\{n\in\N\colon \Phi^n(\alpha)\in \Gamma\}$ is a union of finitely many arithmetic progressions along with a set of Banach density equal to $0$. In addition, assuming $\Phi$ is regular, we prove that the set $S$ must be finite.
\end{abstract}

\maketitle

%\tableofcontents

%%%%%%%%%%%%%%%%%%%%%%%%%%%%%%%%%%%%%%%%%%%%%%%%%%%%%%%%%%%%%%%%%%%%%%%%%%%%%%%
%%%%%%%%%%%%%%%%%%%%%%%%%%%%%%%%%%%%%%%%%%%%%%%%%%%%%%%%%%%%%%%%%%%%%%%%%%%%%%%

\section{Introduction}

The Mordell-Lang conjecture, now a theorem due to Faltings \cite{Fal83}, asserts that a subvariety of an abelian variety $A$ defined over a field of characteristic zero intersects a finitely generated subgroup $\Gamma$ of $A$ in a finite union of cosets of subgroups $\Gamma$.  This was later extended by Vojta \cite{Vojta1,Vojta2} (see also \cite{McQ}), who showed the analogous result holds for semiabelian varieties; that is, (commutative) algebraic groups $G$ that lie in a short exact sequence of algebraic groups
$$1\to \mathbb{G}_m^N \to G \to A\to 1$$ with $A$ an abelian variety and $N$ a nonnegative integer.
This result is noteworthy in that it shows that the interaction between the geometric structure with the underlying group theoretic structure of a semiabelian variety is well-behaved.  In recent years, the Mordell-Lang conjecture has inspired the so-called Dynamical Mordell-Lang conjecture, in which one now has an algebraic variety with a rational self-map and one now seeks to show that the interaction between the geometric structure and the dynamical structure is again well-behaved.  More precisely, the Dynamical Mordell-Lang conjecture asserts that if $X$ is a quasi-projective variety defined over a field of characteristic zero, and $\Phi: X\dra X$ is a rational self-map and $c$ and $Y$ are respectively a closed point of $X$ whose forward orbit under $\Phi$ is well-defined and a Zariski closed subset of $X$, then the set $\{n\in \mathbb{N}_0\colon \Phi^n(c)\in Y\}$ is a finite (possibly empty) union of infinite arithmetic progressions along with a (possibly empty) finite set. (We recall that for a rational self-map $\Phi$ on a variety $X$ and for $n\in\N_0$, $\Phi^n$ denotes the $n$-th iterate of $\Phi$, where $\Phi^0$ is taken to be the identity map by convention.). For more details regarding the Dynamical Mordell-Lang conjecture, we refer the reader to the book~\cite{DML-book}.

If, however, one returns to the setting of semiabelian varieties, which gave impetus to the 
Dynamical Mordell-Lang conjecture, it is very natural to ask whether the interaction between the 
dynamical structure and the group theoretic structure is similarly well-behaved when one 
has a rational self-map $\Phi$ of a semiabelian variety $G$.  In this paper we study this 
question and show that in the case that $\Phi$ is a morphism one in fact obtains the 
analogous conclusion as in the statement of the Dynamical Mordell-Lang conjecture.  In the 
case when $\Phi$ is a rational self-map the conclusion does not hold in general, but a 
weaker version holds in which the finite set is replaced by a set of zero Banach density.  
Intuitively, a set of zero Banach density is a \emph{very sparse} set.   Precisely, given a 
subset $I\subseteq\N_0$, we define the (upper) Banach density, $\bar{\delta}(I)$, of $I$ 
using the formula 
\begin{equation}
\label{eq:Banach}
\bar{\delta}(I):=\limsup_{|J|\to\infty}\frac{|I\cap J|}{|J|},
\end{equation}
where the above $\limsup$ is computed with respect to finite intervals $J$ in the natural numbers. We note that due to definition~\eqref{eq:Banach}, one could have potentially a set of natural density $0$, but of Banach density equal to $1$, and so the condition that a set have Banach density zero is a much stronger constraint than merely being of zero natural density.

%%%%%%%%%%%%%%%%%%%%%%%%%%%%%%%%%%%%%%%%%%%%%%%%%%%%%%%%%%%%%%%%%%%%%%%%%
Our main result is given by the following theorem.
\begin{thm}
\label{thm:main}
Let $G$ be a semiabelian variety defined over an algebraically closed field $K$ endowed with a rational self-map $\Phi$, let $\alpha\in G(K)$ be a point for which its orbit under the action of $\Phi$ is well-defined, and let $\Gamma\subset G(K)$ be a finitely generated subgroup.
%, and let
%\begin{equation}
%\label{eq:return_set}
%R:=R(G,\Phi,\alpha,\Gamma):=\left\{n\in\N_0\colon \Phi^n(\alpha)\in\Gamma\right\}.
%\end{equation}
Then the following hold:
\begin{enumerate}
\item[(i)] the set $\{n\colon \Phi^n(\alpha)\in \Gamma\}$ is a finite union of arithmetic progressions along with a set of zero Banach density.
%set $S:=S(G,\Phi,\alpha,\Gamma)$ of Banach density equal to $0$;
\item[(ii)]  if, in addition, $\Phi$ is regular then $\{n\colon \Phi^n(\alpha)\in \Gamma\}$ is a finite union of arithmetic progressions along with a finite set.
\end{enumerate}
\end{thm}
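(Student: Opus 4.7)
The plan is to prove (ii) first, via an explicit Skolem-Mahler-Lech reduction inside a carefully chosen finitely generated abelian group, and then to derive (i) by combining the Faltings-Vojta Mordell-Lang theorem with a Banach-density-zero form of Dynamical Mordell-Lang for rational self-maps of semiabelian varieties.

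For (ii), I would first record two structural facts. Any regular self-map of $G$ decomposes as $\Phi(x)=\phi(x)+c$ with $\phi\in\operatorname{End}(G)$ and $c=\Phi(0)$: the failure term $\mu(x,y):=\Phi(x+y)-\Phi(x)-\Phi(y)$ vanishes after projecting to $A$ by rigidity for abelian varieties, and after projecting to $\mathbb{G}_m^N$ because any morphism $G\times G\to\mathbb{G}_m$ is a constant times a character. Moreover $\operatorname{End}(G)$ is finitely generated as a $\Z$-module, since it embeds into $M_N(\Z)\times\operatorname{End}(A)$ via $\operatorname{Hom}(A,\mathbb{G}_m^N)=0$. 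The subgroup
\[
\Gamma':=\Z[\phi]\cdot\Gamma+\Z[\phi]\cdot\langle\alpha,c\rangle
\]
is therefore a finitely generated, $\Phi$-invariant subgroup of $G(K)$ containing $\Gamma$ and the whole $\Phi$-orbit of $\alpha$. Writing $\Gamma'\cong\Z^r\oplus T$ with $T$ finite of exponent $e$, the orbit splits as $\Phi^n(\alpha)=(a_n,t_n)$, where the free coordinate satisfies an affine linear recurrence $a_{n+1}=Ma_n+c_f$ and the pair $(a_n\bmod e,\,t_n)$ is eventually periodic in the finite set $(\Z/e\Z)^r\oplus T$. The condition $\Phi^n(\alpha)\in\Gamma$ is then the conjunction of: (a)~$a_n$ lies in the sublattice $\Gamma_{\mathrm{free}}\subseteq\Z^r$ (the image of $\Gamma$), which by Skolem-Mahler-Lech applied to $a_n\bmod\Gamma_{\mathrm{free}}$ defines a finite union of arithmetic progressions plus a finite exceptional set; and (b)~given (a), the torsion coordinate $t_n$ lies in the specific coset of $\Gamma\cap T$ in $T$ determined by the group-theoretic section $\Gamma_{\mathrm{free}}\to T/(\Gamma\cap T)$ coming from $\Gamma$, an eventual-periodicity condition along each such progression. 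Intersecting the two yields the description of $S$ claimed in (ii).

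For (i), the endomorphism decomposition is unavailable. Here I would first invoke the Mordell-Lang theorem of Faltings-Vojta to write $\overline{\Gamma}=\bigsqcup_{i=1}^{k}(g_i+H)$ as a finite union of cosets of an algebraic subgroup $H\le G$, and then apply a Banach-density-zero version of Dynamical Mordell-Lang for rational self-maps on semiabelian varieties to conclude that, for each $i$, the set $\{n\colon\Phi^n(\alpha)\in g_i+H\}$ is a finite union of arithmetic progressions together with a set of upper Banach density $0$. One then finishes by induction on $\dim G$: along each arithmetic progression $a+b\N_0$ on which the orbit is trapped in a single coset $g_i+H$, translating by $-g_i$ places the orbit in the smaller semiabelian variety $H$, the target becoming a coset of $\Gamma\cap H$ inside $H$. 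The hardest step, and the one requiring the most care, is making this descent precise: $\Phi^b$ does not in general restrict to a rational self-map of $H$, so the induced dynamics on $H$ must be constructed along the given orbit, and one must verify that the combined ``arithmetic progressions plus Banach-density-zero remainder'' shape is preserved through every reduction---i.e.~that the combinatorial argument underlying (ii) and the geometric zero-density estimates driving weak DML for rational maps combine compatibly with the fine group-theoretic conditions imposed by $\Gamma\subseteq\overline{\Gamma}$.
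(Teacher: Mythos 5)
Your argument for part (ii) is essentially the paper's: you build a finitely generated $\Phi$-invariant group containing $\Gamma$ and the orbit (the paper does this via the integrality of $\operatorname{End}(G)$ over $\Z$ and the resulting linear recurrence for $\Phi^n(\alpha)$, which is the same content as your $\Z[\phi]$-module $\Gamma'$), and then you run Skolem--Mahler--Lech on the free part of the finitely generated abelian group and eventual periodicity on the torsion part. That is exactly Lemma~\ref{lem:fg} plus Proposition~\ref{SML fin gen}, and your version is correct.

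Part (i), however, has a genuine gap, and it is not just the technical point you flag about inducing dynamics on $H$. Your strategy is to first locate the orbit inside the Zariski closure $\overline{\Gamma}=\bigsqcup_i (g_i+H)$ and then descend. But in the typical case $\Gamma$ is Zariski dense in $G$ (already for $G=\bG_m$ and $\Gamma=\langle 2\rangle$, as in Example~\ref{ex:1}), so $\overline{\Gamma}=G$, the weak dynamical Mordell--Lang step is vacuous, and no descent occurs: the strategy provides no mechanism at all for distinguishing membership in the finitely generated group $\Gamma$ from membership in its Zariski closure, which is the entire difficulty of the theorem. Even when $\overline{\Gamma}\neq G$, the descent does not close up: along an arithmetic progression the object carrying a self-map is the orbit closure $Z\subseteq g_i+H$, not $H$ itself, so the inductive hypothesis (which concerns rational self-maps of a semiabelian variety) does not apply, and at the bottom of any such induction one again faces a Zariski-dense finitely generated subgroup. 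The missing ingredient is an arithmetic input: the paper uses the recurrence satisfied by the induced regular map $\bar{\Phi}$ on the abelian quotient $A$ to manufacture lifts $\beta_n$ with $\Phi^n(\alpha)-\beta_n\in\bG_m^N(K)$, reducing the problem to whether the values of $N$ rational functions along the orbit lie in a fixed finitely generated subgroup of $\bG_m(K)$; this is settled by the fusion Mordell--Lang theorems of \cite{BCE, BG-2}, and $\Gamma$ is then recovered from the auxiliary groups via the intersection trick $\Gamma_1\cap\Gamma_1'=\Gamma$ of Proposition~\ref{prop:f.g.}. Without a result of that type (or an equivalent substitute), your outline for (i) cannot be completed.
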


\begin{remark}
\label{rem:important}
The conclusions to the statement of Theorem \ref{thm:main} (i) and (ii) both hold if we replace $\Gamma$ by a coset of a finite of a finitely generated group, which can be seen by conjugating $\Phi$ by a suitable translation map and suitably modifying the point $\alpha$.
\end{remark}%Since replacing $(G,\Phi,\alpha,\Gamma)$ by $\left(G,\circ \Phi\circ T_,\alpha-,0,\Gamma\right)$ (where $T_\beta$ always represents the translation-by-$\beta$ automorphism of $G$ for any given point $\beta\in G(K)$) leaves invariant the return set defined in equation~\eqref{eq:return_set}, this means that one could assume in Theorem~\ref{thm:main} that $=0$. However, since our proof of Theorem~\ref{thm:main} will involve conjugating $\Phi$ by other suitable translation maps, we will not be able to automatically assume that $=0$ and instead prove the desired conclusion in Theorem~\ref{thm:main} in the generality that it is currently stated.
%\end{remark}

Theorem~\ref{thm:main} (ii), while not obviously connected to the Dynamical Mordell-Lang conjecture, in fact quickly implies that the Dynamical Mordell-Lang conjecture holds for dynamical systems $(G,\Phi)$ when $G$ is a semiabelian variety and $\Phi$ is a regular self-map, if one also uses the famous theorem of Vojta \cite{Vojta1, Vojta2} proving the classical Modell-Lang Conjecture (see Corollary \ref{cor:1}). The Dynamical Mordell-Lang conjecture for semiabelian varieties was proven in \cite{GT0} for regular self-maps on semiabelian varieties, but the method employed in \cite{GT0} does not use the above route and instead employs a $p$-adic approach which is (along with the main result of \cite{Jason}) the precursor of the so-called \emph{$p$-adic arc lemma} from \cite{BGT0}. 

Our Theorem~\ref{thm:main} is also connected to the main result of \cite{BGT-approximate} in which it was shown that for any algebraic dynamical system $(X,\Phi)$, where $X$ is a variety and $\Phi$ is even a rational self-map, then given $x\in X$, the set of $n$ for which $\Phi^n(x)$ lies in a fixed subvariety $Y$ of $X$ is a union of finitely many arithmetic progressions along with a set of Banach density $0$. However, the method of proof from \cite{BGT-approximate} employs Noetherian induction and it cannot be modified to prove the conclusion from Theorem~\ref{thm:main}, which has a more algebraic flavour in the spirit of the classical Mordell-Lang conjecture.

Our Theorem~\ref{thm:main} is connected to a result of the authors regarding a fusion variant of the classical and dynamical Mordell-Lang conjectures, which itself built on the paper \cite{BCE}. More precisely, in \cite{BCE, BG-2}, it is shown that for a dominant rational self-map $\Phi$ on a variety $X$ defined over a field $K$, endowed with a rational function $f:X\dashrightarrow \mathbb{P}^1$, then for a starting point $\alpha\in X(K)$ with a well-defined orbit under $\Phi$ and for a finitely generated subgroup $\Gamma\subset \bG_m(K)$, the set of all $n\in\N_0$ for which $f(\Phi^n(\alpha))\in\Gamma$ is a union of finitely many arithmetic progressions along with a set of Banach density equal to $0$.  This result plays a crucial role in our proof of Theorem \ref{thm:main} (i).
 %Actually, our proof of Theorem~\ref{thm:main} uses at a crucial point the results from \cite{BCE, BG-2}; for more details regarding our plan of proof, see Section~\ref{subsec:plan}.

Finally, we note that in the conclusion to the statement of Theorem~\ref{thm:main} (i), one cannot in general replace the set of zero Banach density with a finite set, as the following examples show.

\begin{example}
\label{ex:1}
For the rational self-map $\Phi$ on $\bG_m$ given by $x\mapsto x+1$, 
along with the starting point $\alpha:=1$ and the subgroup $\Gamma\subset \bG_m(\mathbb{Q})$ generated by $2$, we see
$$\{n\colon \Phi^n(\alpha)\in \Gamma\}=\{0\}\cup\left\{2^n\colon n\ge 0\right\}.$$
\end{example}

Similar examples arise in positive characteristic, such as the following one.
\begin{example}
\label{ex:2}
For the map $\Phi:\bG_m\dra \bG_m$ defined over $\F_p(t)$ by the formula:  $\Phi(x)=tx-t+1$, we see that $\Phi^n(t+1)=t^{n+1}+1$ for each $n\ge 0$. So for the cyclic subgroup $\Gamma\subset\bG_m(\F_p(t))$ generated by $t+1$ and $\alpha=t+1$, we have
$$\{n\colon \Phi^n(\alpha)\in \Gamma\} =\left\{p^n-1\colon n\in\N_0\right\}.$$
\end{example}

It is tempting to conjecture that the sparse sets $R(G,\Phi,\alpha,\Gamma)$ from the conclusion of Theorem~\ref{thm:main} are always the image of some exponential function such as in Examples~\ref{ex:1}~and~\ref{ex:2}. However, as shown in \cite[Section~4]{JIMJ}, it is very hard to predict the exact shape of return sets associated with questions having the flavour of the Dynamical Mordell-Lang problem in characteristic $p$.  

%%%%%%%%%%%%%%%%%%%%%%%%%%%%%%%%%%%%%%%%%%%%%%%%%%%%%%%%%%%%%%%%%%%%%%%%%%%%%%%

The outline of this paper is as follows. We begin by presenting various technical results in Section~\ref{sec:technical} regarding linear recurrence sequences, semiabelian varieties and also basic theory of finitely generated groups, which will later be employed in our proof of Theorem~\ref{thm:main}. Then we prove Theorem~\ref{thm:main} (ii) in Section~\ref{sec:special}. We conclude with our proof of Theorem~\ref{thm:main} (i) in Section~\ref{sec:general}. 

%The special case of regular self-maps $\Phi$ of semiabelian varieties $G$ is reduced to the famous Skolem-Mahler-Lech theorem (see Proposition~\ref{ever 3}). 
%Our strategy for proving the general case in Theorem~\ref{thm:main} is to reduce it first to a similar question for regular self-maps of algebraic tori (for this part we also employ the main results of \cite{BCE, BG-2}) and then infer the special case in Theorem~\ref{thm:main} to derive the desired conclusion in general.

%%%%%%%%%%%%%%%%%%%%%%%%%%%%%%%%%%%%%%%%%%%%%%%%%%%%%%%%%%%%%%%%%%%%%%%%%%%%%%%
%%%%%%%%%%%%%%%%%%%%%%%%%%%%%%%%%%%%%%%%%%%%%%%%%%%%%%%%%%%%%%%%%%%%%%%%%%%%%%%

\section{Technical background}
\label{sec:technical}

We collect here the various technical results later employed in our proofs.

%%%%%%%%%%%%%%%%%%%%%%%%%%%%%%%%%%%%%%%%%%%%%%%%%%%%%%%%%%%%%%%%%%%%%%%%%%%%%%%

\subsection{Linear recurrence sequences}
\label{subsec:lrs}

In this section we state the Skolem-Mahler-Lech theorem which will be
used in our proof. First we need to
introduce the basic set-up for \emph{linear recurrence sequences} (see
\cite{Schinzel} for more details on linear recurrent sequences).
\begin{defn}
\label{recurrence sequences}
Let $(H,+)$ be an abelian group. The sequence $\{u_n\}_{n\in\N_0}\subseteq H$ is a linear recurrence sequence (defined over the integers), if there exists a positive integer $k$ and there exist constants $c_1,\dots,c_{k}\in\Z$  such that
\begin{equation}
\label{the relation}
u_{n+k} = \sum_{i=1}^{k} c_i u_{n+k-i}\text{, for each $n\in\N_0$.}
\end{equation}
\end{defn}

The following result is the well-known Skolem-Mahler-Lech theorem which applies to general linear recurrence sequences of complex numbers (not necessarily defined over the integers); for more details, see \cite{Skolem, Mahler, Lech}.

\begin{prop}
\label{ever 3}
Let $\{u_k\}_{k\in\N_0}\subset \C$ be a linear recurrence sequence and let $C\in\C$. Then the set $\{n\in\N_0\colon u_n=C\}$ 
is a union of finitely many arithmetic progressions along with a finite set.
\end{prop}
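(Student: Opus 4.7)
My plan is to follow the classical $p$-adic method of Skolem, extended by Mahler and Lech to handle complex coefficients. First, I would reduce to the case $C=0$: since the constant sequence $C,C,C,\ldots$ is itself a (degenerate) linear recurrence sequence, the difference $w_n := u_n - C$ is again a linear recurrence sequence (one simply convolves the two defining recurrences), and $\{n\colon u_n = C\} = \{n\colon w_n = 0\}$. So it would suffice to describe the zero set of an arbitrary linear recurrence sequence $\{w_n\}\subset \C$.

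Next, I would write $w_n$ in its closed form as a generalized power sum $w_n = \sum_{i=1}^s P_i(n)\,\alpha_i^n$, where $\alpha_1,\ldots,\alpha_s \in \C^*$ are the distinct nonzero roots of the characteristic polynomial and $P_i \in \C[x]$. Any contribution from a zero root affects only finitely many initial terms, which can be absorbed into the final finite set. Let $R$ denote the finitely generated $\Z$-subalgebra of $\C$ generated by the $\alpha_i$, their inverses $\alpha_i^{-1}$, and the coefficients of the $P_i$.

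The heart of the argument is to embed $R$ into a $p$-adic field. By standard facts about finitely generated domains, one can choose a rational prime $p$ and an embedding of $R$ into a finite extension $L/\Q_p$ so that each $\alpha_i$ lies in $\OO_L^\times$. Choosing a sufficiently large integer $M$ (divisible by the order of the residue field's multiplicative group, and by a further power of $p$), one arranges that each $\alpha_i^M$ lies in a small enough neighbourhood of $1$ that $x\mapsto \alpha_i^{Mx+j}$ extends, for each fixed $j\in\{0,1,\ldots,M-1\}$, to a $p$-adic analytic function of $x\in\Z_p$ via the $p$-adic logarithm and exponential. Consequently, for each residue class $j$, the function $f_j(x):=w_{Mx+j}$ becomes a $p$-adic analytic function on $\Z_p$. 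By Strassmann's theorem, each $f_j$ either vanishes identically, contributing the entire arithmetic progression $\{Mn+j\colon n\in\N_0\}$ to the zero set, or has only finitely many zeros in $\Z_p$, contributing only finitely many indices to the zero set. Taking the union over the $M$ residue classes yields the claimed decomposition.

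The most delicate step will be the joint choice of $p$ and $M$ so that every $\alpha_i$ simultaneously becomes a $p$-adic unit sufficiently close to $1$ for the $p$-adic exponential and logarithm to apply on the entire disc $\Z_p$; this is the essence of Skolem's method and relies crucially on $R$ being finitely generated over $\Z$ (so that only finitely many places need to be avoided). Everything else is bookkeeping: putting $w_n$ in closed form, convolving recurrences, and invoking Strassmann as a black box.
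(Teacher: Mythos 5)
Your outline is correct, but note that the paper does not prove this proposition at all: it is quoted as the classical Skolem--Mahler--Lech theorem with citations to Skolem, Mahler, and Lech, and your sketch is precisely the standard $p$-adic argument from those sources (reduction to $C=0$, closed form as a generalized power sum, embedding the finitely generated ring into a finite extension of $\Q_p$ with the roots becoming units, interpolation along residue classes by $p$-adic analytic functions, and Strassmann's theorem). The only step you would need to flesh out in a self-contained writeup is the embedding lemma (Lech's/Cassels' theorem that a finitely generated domain of characteristic zero embeds in some $\Q_p$-extension with finitely many prescribed nonzero elements sent to units); everything else is routine as you say.
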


%%%%%%%%%%%%%%%%%%%%%%%%%%%%%%%%%%%%%%%%%%%%%%%%%%%%%%%%%%%%%%%%%%%%%%%%%%%%%%%

\subsection{Semiabelian varieties}
\label{subsec:technical_semiabelian}

For a semiabelian variety $G$ defined over an algebraically closed field $K$ there exists a short exact sequence of algebraic groups defined over $K$: 
\begin{equation}
\label{eq:short_exact_0}
1\lra \bG_m^N\lra G\lra A\lra 1,
\end{equation}
where $N\in\N_0$ and $A$ is an abelian variety. Furthermore, each regular self-map $\Phi$ of $G$ is a composition of a translation with a group endomorphism. Finally, for each group endomorphism $\Psi$ of $G$, there exist integers $a_0,a_1,\dots, a_{g-1}$ (where $g\le 2\dim(G)$) such that 
\begin{equation}
\label{eq:integral_0}
\Psi^g=\sum_{i=1}^{g}a_i\Psi^{g-i}.
\end{equation}
For more details regarding semiabelian varieties, we refer the reader to \cite{Iitaka, NW}.

%%%%%%%%%%%%%%%%%%%%%%%%%%%%%%%%%%%%%%%%%%%%%%%%%%%%%%%%%%%%%%%%%%%%%%%%%%%%%

\subsection{Finitely generated subgroups}
\label{subsec:f.g.}

We conclude by deriving a useful result regarding finitely generated subgroups  which will be employed in our proof of Theorem~\ref{thm:main}. First we need a definition.
\begin{defn}
\label{def:f.g.}
Let $\Gamma$ be a finitely generated subgroup of an abelian group $(G,+)$. We say that the elements $z_1,\dots, z_n\in G$ are linearly independent with respect to $\Gamma$ if for each integers $k_1,\dots, k_n$, we have that
$$\sum_{i=1}^n k_iz_i\in\Gamma$$
if and only if $k_1=\cdots = k_n=0$.
\end{defn}

\begin{prop}
\label{prop:f.g.}
Let $\Gamma$ be a finitely generated subgroup of the abelian group $(G,+)$. Let $m\in\N$ and $y_1,\dots, y_n,z_1,\dots, z_n\in G$ with the property that  $z_1,\dots, z_n$ are linearly independent with respect to the subgroup $\Gamma_1$ spanned by $\Gamma$ along with $y_1,\dots, y_n$. Let  $y_i':=y_i+z_i$ for each $i=1,\dots, n$, and we let the subgroup $\Gamma_1'\subseteq G$ spanned by $\Gamma$ along with $y_1',\dots, y_n'$. Then $\Gamma_1\cap \Gamma_1'=\Gamma$. 
\end{prop}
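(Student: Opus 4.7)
The plan is a direct algebraic manipulation that is genuinely elementary; I do not expect a serious obstacle, so my proposal is less a strategy than an outline of the one natural argument.

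The inclusion $\Gamma \subseteq \Gamma_1 \cap \Gamma_1'$ is immediate from the definitions, since $\Gamma$ is contained in both $\Gamma_1$ and $\Gamma_1'$ by construction. So the content is in the reverse inclusion.

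To prove $\Gamma_1 \cap \Gamma_1' \subseteq \Gamma$, I take an arbitrary $w$ in the intersection and express it in two ways using the two sets of generators. Concretely, there exist $\gamma, \gamma' \in \Gamma$ and integers $k_1, \dots, k_n, \ell_1, \dots, \ell_n$ such that
\[
w = \gamma + \sum_{i=1}^n k_i y_i = \gamma' + \sum_{i=1}^n \ell_i y_i' = \gamma' + \sum_{i=1}^n \ell_i(y_i + z_i).
\]
Rearranging yields
\[
\sum_{i=1}^n \ell_i z_i = (\gamma - \gamma') + \sum_{i=1}^n (k_i - \ell_i) y_i,
\]
and the right-hand side lies in $\Gamma_1$. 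By the hypothesis that $z_1, \dots, z_n$ are linearly independent with respect to $\Gamma_1$, it follows that $\ell_1 = \cdots = \ell_n = 0$, and therefore $w = \gamma' \in \Gamma$, as required.

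The only subtlety worth flagging is bookkeeping: one must write $w$ as an element of $\Gamma_1'$ using precisely the generators $y_i'$ (not the $y_i$), so that after substituting $y_i' = y_i + z_i$ the $z_i$'s appear with the \emph{same} coefficients $\ell_i$ that multiplied the $y_i'$. Once this is set up correctly, the linear independence hypothesis does all the work in a single step, and there is no need for induction on $n$ or any further structural input about $G$ or $\Gamma$.
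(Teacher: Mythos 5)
Your proof is correct and is essentially identical to the paper's own argument: both write an element of the intersection in terms of the two generating sets, subtract, and invoke the linear independence of $z_1,\dots,z_n$ with respect to $\Gamma_1$ to force the coefficients of the $y_i'$ to vanish. No issues.
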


\begin{proof}
Clearly, $\Gamma$ is contained in $\Gamma_1\cap \Gamma_1'$, so it suffices to show that $\Gamma_1\cap \Gamma_1'\subseteq \Gamma$.  Let $\gamma\in \Gamma_1\cap \Gamma_1'$.  Then there exist integers  $k_1,\dots, k_n$ and $k_1',\ldots ,k_n'$ such that
$\gamma\in k_1 y_1+\cdots + k_n y_n + \Gamma$ and $\gamma\in k_1' y_1' + \cdots + k_n' y_n' +\Gamma$.  Hence $$k_1 y_1+\cdots + k_n y_n - k_1' y_1'-\cdots -k_n' y_n' \in \Gamma.$$  But this gives that $-k_1' z_1 -\cdots -k_n' z_n \in \Gamma_1$ and since $z_1,\ldots ,z_n$ are linearly independent with respect to the subgroup $\Gamma_1$, we must have $k_1'=\cdots =k_n'=0$ and so $\gamma\in \Gamma$ as desired.
The result follows.
\end{proof}

%%%%%%%%%%%%%%%%%%%%%%%%%%%%%%%%%%%%%%%%%%%%%%%%%%%%%%%%%%%%%%%%%%%%%%%%%%%%%%%
%%%%%%%%%%%%%%%%%%%%%%%%%%%%%%%%%%%%%%%%%%%%%%%%%%%%%%%%%%%%%%%%%%%%%%%%%%%%%%%

\section{Proof of Theorem~\ref{thm:main} (ii)}
\label{sec:special}

For this case, the strategy is inspired by the proofs from \cite{TAMS, JIMJ}, even though our arguments are simpler. 

%%%%%%%%%%%%%%%%%%%%%%%%%%%%%%%%%%%%%%%%%%%%%%%%%%%%%%%%%%%%%%%%%%%%%%%%%%%%%%%

%\subsection{Reducing the problem to a question about linear recurrence sequences in finitely generated groups}
%\label{subsec:reduction_0}
We begin with a useful result about orbits of points in semiabelian varieties.
\begin{lemma}
Let $G$ be a semiabelian variety and let $\Phi: G\to G$ be a regular self-map of $G$.  If $\alpha\in G$ then $\{\Phi^n(\alpha)\colon n\ge 0\}$ is contained in a finitely generated subgroup of $G$.
\label{lem:fg}
\end{lemma}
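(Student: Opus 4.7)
The plan is to exploit the structural facts about regular self-maps of semiabelian varieties recorded in Subsection~\ref{subsec:technical_semiabelian}. Since $\Phi$ is regular, we may write $\Phi = \tau_\beta \circ \Psi$ for some group endomorphism $\Psi$ of $G$ and some $\beta\in G$, where $\tau_\beta(x):=x+\beta$. A straightforward induction on $n$ then yields the orbit formula
\[
\Phi^n(\alpha) \;=\; \Psi^n(\alpha) \;+\; \sum_{i=0}^{n-1}\Psi^i(\beta),\qquad n\ge 0.
\]
So it suffices to show that all the terms appearing on the right lie in a single finitely generated subgroup of $G$.

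Next, I would invoke the integral relation~\eqref{eq:integral_0}: there exist an integer $g\le 2\dim(G)$ and integers $a_1,\ldots,a_g$ with $\Psi^g = \sum_{i=1}^{g}a_i\Psi^{g-i}$ as endomorphisms of $G$. Applying this identity to any fixed point $x\in G$ and iterating, a simple induction on $n$ shows that $\Psi^n(x)$ lies in the $\Z$-span of $x,\Psi(x),\ldots,\Psi^{g-1}(x)$ for every $n\ge 0$.

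Applying this observation to $x=\alpha$ and then to $x=\beta$, each $\Psi^n(\alpha)$ is an integer linear combination of $\alpha,\Psi(\alpha),\ldots,\Psi^{g-1}(\alpha)$, and each $\Psi^i(\beta)$ is an integer linear combination of $\beta,\Psi(\beta),\ldots,\Psi^{g-1}(\beta)$. Substituting these into the orbit formula, we conclude that $\Phi^n(\alpha)$ belongs, for every $n$, to the subgroup
\[
H \;:=\; \bigl\langle \Psi^j(\alpha),\,\Psi^j(\beta) \;:\; 0\le j< g\bigr\rangle \subseteq G,
\]
which is finitely generated by construction.

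There is no serious obstacle here: the proof is essentially bookkeeping. The only point requiring care is the order of composition in writing $\Phi = \tau_\beta\circ\Psi$ (which dictates the precise form of the telescoping sum $\sum_{i=0}^{n-1}\Psi^i(\beta)$); writing it the other way around simply replaces $\beta$ by $\Psi(\beta')$ and does not affect the conclusion. The real content of the argument is the integral Cayley--Hamilton-type relation~\eqref{eq:integral_0}, which converts the \emph{a priori} infinite orbit of $\Psi$ into a $\Z$-linear hull of finitely many elements.
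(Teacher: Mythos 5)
Your proof is correct and follows essentially the same route as the paper: decompose $\Phi$ as a translation composed with an endomorphism, use the orbit formula $\Phi^n(\alpha)=\Psi^n(\alpha)+\sum_{i=0}^{n-1}\Psi^i(\beta)$, and invoke the integral relation \eqref{eq:integral_0} to trap everything in a finitely generated group. The only (immaterial) difference is that the paper packages the argument as a single order-$(g+1)$ linear recurrence satisfied by the sequence $\Phi^n(\alpha)$ itself (which it reuses later in the proof of Theorem~\ref{thm:main}~(ii)), whereas you bound the two summands of the orbit formula separately.
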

\begin{proof}
The regular self-map $\Phi:G\lra G$ can be written as $T_\beta\circ \Psi$, where $T_\beta$ is the translation-by-$\beta$ map on $G$ and $\Psi$ is a group endomorphism.  Since every endomorphism of a semiabelian variety is integral over  $\bZ$ (see \eqref{eq:integral_0}), we may let $X^g - \sum_{i=1}^g e_i X^{g-i}$ (for some $g\in\N$) be the minimal
  polynomial of $\Psi$ over $\Z$. Then, for each $k\ge 0$, we
  have that for a point $\gamma\in G$: 
\begin{equation}
\label{recurrence 2-0}
\Psi^{n+g}(\gamma) = \sum_{i=1}^g e_i \Psi^{n+g-i}(\gamma),
\end{equation}
for each $n\in\N_0$.
Furthermore, given $\alpha\in G(K)$, we have the general formula:
\begin{equation}
\label{eq:general_formula_0}
\Phi^n(\alpha)=\Psi^n(\alpha)+\sum_{j=0}^{n-1}\Psi^j(\beta).
\end{equation}
Employing equation~\eqref{eq:general_formula_0} and the recurrence relation~\eqref{recurrence 2-0} applied to the points $\alpha$ and $\beta$, we see that 
\begin{equation}
\label{recurrence 2}
\Phi^{n+g+1}(\alpha) = (e_1+1)\Phi^{n+g}(\alpha)+\sum_{i=2}^{g} (e_i-e_{i-1})\Phi^{n+1+g-i}(\alpha) - e_g\Phi^n(\alpha).
\end{equation}
So, because $\{\Phi^n(\alpha)\}_{n\in\N_0}$ is a linear recurrence sequence of order $g+1$, then the orbit of $\alpha$ under $\Phi$ is contained in $\Gamma_1$, where $\Gamma_1$ is the finitely generated subgroup of $G$ spanned by  the generators of $\Gamma$, along with  $\Phi^i(\alpha)$  for $i=0,\dots,g$.  
\end{proof}
Hence, it suffices to show the following general statement.
\begin{prop}
\label{SML fin gen}
Let $\Gamma_1$ be a finitely generated group, and let $\Gamma$ be a subgroup of $\Gamma_1$. Then for every linear recurrence sequence $\{x_n\}_{n\in\N_0}\subset\Gamma_1$ defined over the integers, the set of all $n\in\N_0$ such that $x_n\in\Gamma$ is a union of finitely many arithmetic progressions along with a finite set. 
\end{prop}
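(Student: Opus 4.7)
My plan is to push the sequence to the quotient $H := \Gamma_1/\Gamma$ and analyze when its image vanishes. Let $\pi\colon \Gamma_1 \to H$ be the canonical projection. Since $\pi$ is a group homomorphism and the recurrence relation~\eqref{the relation} is a $\Z$-linear combination, the sequence $y_n := \pi(x_n)$ satisfies the very same integer linear recurrence as $\{x_n\}$, and $x_n\in\Gamma$ if and only if $y_n=0$. Note that $\Gamma_1$ is abelian (as a subgroup of the semiabelian variety's group of points), so $H$ is a finitely generated abelian group, and by the structure theorem we may write $H\cong \Z^r\oplus T$ for some $r\ge 0$ and some finite abelian group $T$.

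Next, I would decompose $y_n=(u_n,t_n)$ according to $H\cong \Z^r\oplus T$. The torsion component $\{t_n\}\subset T$ still satisfies the same recurrence; since $T$ is finite, the block $(t_n,t_{n+1},\dots,t_{n+k-1})$ takes values in the finite set $T^k$, and by the recurrence this block determines all later terms. Hence $\{t_n\}$ is eventually periodic, and so the set $S_T:=\{n\in\N_0\colon t_n=0\}$ is a finite union of arithmetic progressions together with a finite set. For the free component, each coordinate $u_n^{(i)}$ ($i=1,\dots,r$) is a $\Z$-valued linear recurrence sequence, so Proposition~\ref{ever 3} applied with $C=0$ shows that $S_i:=\{n\in\N_0\colon u_n^{(i)}=0\}$ is a finite union of arithmetic progressions along with a finite set.

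To conclude, I would note that the desired set equals $S_T\cap S_1\cap\cdots\cap S_r$, and that the family $\mathcal{F}$ of subsets of $\N_0$ expressible as a finite union of arithmetic progressions together with a finite set is closed under finite intersection: the intersection of two arithmetic progressions is either empty or an arithmetic progression (by the Chinese Remainder Theorem), so the closure under intersection follows by distributing. Thus $\{n\in\N_0\colon x_n\in\Gamma\}$ lies in $\mathcal{F}$, as required.

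The only real subtlety here is ensuring that projecting along a group homomorphism and restricting to a coordinate preserve the property of being a linear recurrence sequence in the sense of Definition~\ref{recurrence sequences}; this is immediate because the defining relation~\eqref{the relation} has integer coefficients and any group homomorphism between abelian groups commutes with such integer linear combinations. The rest of the argument is a clean combination of the Skolem--Mahler--Lech theorem for the $\Z^r$-part with the pigeonhole/eventual-periodicity argument for the finite torsion part.
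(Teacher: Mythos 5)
Your proposal is correct and follows essentially the same route as the paper: reduce to the quotient $\Gamma_1/\Gamma$, split the finitely generated abelian group into a free part handled by the Skolem--Mahler--Lech theorem (Proposition~\ref{ever 3}) and a finite torsion part handled by eventual periodicity, then intersect using closure of the relevant family of sets under finite intersections. The only cosmetic difference is that the paper decomposes into cyclic summands rather than $\Z^r\oplus T$; the substance is identical.
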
 
\begin{proof} We first note that it suffices to prove the proposition in the case that $\Gamma=\{0\}$. Indeed, we let $\pi:\Gamma_1\lra \Gamma_1/\Gamma$ be the canonical projection. Then $y_n:=\pi(x_n)$, for each $n\in\N$, forms another linear recurrence sequence, also defined over the integers. Hence, each $n\in\N$ satisfies $x_n\in\Gamma$ if and only if $y_n=0$.

Thus we may assume that $\Gamma=\{0\}$ for the remainder of the proof and so we only need to show that the set of all integers $n$ such that $x_n=0$ consists of at most finitely many arithmetic progressions in $\N$ along with a finite set. 

Using the fact that the intersection of two arithmetic progressions is another arithmetic progression (or the empty set), we see that if we can write $\Gamma_1=H_1\oplus H_2$, and let $\pi_1:\Gamma_1\lra H_1$ and $\pi_2:\Gamma_1\lra H_2$ be the canonical projections, then for $n\in\N_0$, let  $y_n:=\pi_1(x_n)$ and $z_n:=\pi_2(x_n)$. It suffices to prove that the set of $n$ for which $y_n\in H_1$ and the set of $n$ for which $z_n\in H_2$ are both expressible as a finite union of arithmetic progressions along with a finite set.

Then since $\Gamma_1$ can be written as a finite direct sum of cyclic groups, we see it suffices to prove Proposition~\ref{SML fin gen} when $\Gamma_1$ is cyclic and $\Gamma=\{0\}$. Now if $\Gamma_1$ is an infinite cyclic group, then $\Gamma_1$ is isomorphic to $\bZ$ (as a group) and the desired conclusion follows from Proposition~\ref{ever 3}. Thus we may assume that
$\Gamma_1$ is finite and so $\Gamma_1\cong \bZ/N\bZ$, for some positive integer $N$. Because $\{x_n\}_{n\in\N_0}$ is a linear recurrence sequence (over $\bZ$) contained in a finite group, we conclude that $\{x_n\}_{n\in\N_0}$ is preperiodic (i.e., there exist integers $k\ge 0$ and $\ell\ge 1$ such that $x_{n+\ell}=x_n$ for each $n\ge k$). Thus the set of all $n\in\N$ such that $x_n=0$ consists of at most finitely many arithmetic progressions along with a finite set.
The result follows.
\end{proof}
\begin{proof}[Proof of Theorem \ref{thm:main} (ii)]
By Lemma \ref{lem:fg}, we have $\{\Phi^n(\alpha)\colon n\ge 0\}$ is contained in finitely generated group. Hence there exists a finitely generated subgroup $\Gamma_1$ of $G$ that contains both $\Gamma$ and $\Phi^j(\alpha)$ for $j\ge 0$.  Then $x_n:=\Phi^n(\alpha)$ lies in $\Gamma_1$ for each $n\ge 0$ and the sequence $\{x_n\}$ satisfies a linear recurrence as the arguments in Lemma \ref{lem:fg} and Equation (\ref{recurrence 2}) show.  Then applying Proposition \ref{SML fin gen} gives the desired result.
\end{proof}
As an immediate consequence, we obtain the Dynamical Mordell-Lang conjecture or semiabelian varieties, which was proved by different methods in \cite{GT0}.
\begin{cor}
Let $\Phi$ be a regular self-map of a semiabelian variety $G$.  Then the Dynamical Mordell-Lang conjecture holds for the dynamical system $(G,\Phi)$.
\label{cor:1}
\end{cor}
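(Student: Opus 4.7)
The plan is to reduce the Dynamical Mordell-Lang conjecture for $(G,\Phi)$ to Theorem~\ref{thm:main}~(ii) via Vojta's theorem on Mordell-Lang for semiabelian varieties. Given $\alpha\in G(K)$ and a Zariski closed subset $Y\subseteq G$, I must show that the return set $S:=\{n\in\N_0\colon \Phi^n(\alpha)\in Y\}$ is a finite union of arithmetic progressions together with a finite set.

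First I would invoke Lemma~\ref{lem:fg} to produce a finitely generated subgroup $\Gamma_0\subseteq G$ containing the entire forward orbit $\{\Phi^n(\alpha)\colon n\ge 0\}$. In particular, $\Phi^n(\alpha)\in Y$ if and only if $\Phi^n(\alpha)\in Y\cap \Gamma_0$, so the problem is reduced to understanding the intersection $Y\cap\Gamma_0$.

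Next I would apply the Mordell-Lang theorem for semiabelian varieties, due to Faltings and Vojta \cite{Vojta1,Vojta2}, to decompose
$$Y\cap \Gamma_0 \;=\; \bigcup_{i=1}^{r}(\gamma_i+H_i),$$
where each $H_i$ is a (necessarily finitely generated) subgroup of $\Gamma_0$ and each $\gamma_i\in\Gamma_0$. Consequently,
$$S\;=\;\bigcup_{i=1}^{r}\bigl\{n\in\N_0\colon \Phi^n(\alpha)\in \gamma_i+H_i\bigr\}.$$

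Finally, by Theorem~\ref{thm:main}~(ii) together with Remark~\ref{rem:important}, each of the sets on the right-hand side is a finite union of arithmetic progressions along with a finite set; taking the union over $i=1,\dots,r$ preserves this structure, yielding the desired description of $S$. The entire argument is bookkeeping once one has Theorem~\ref{thm:main}~(ii) and Vojta's theorem in hand, so I would not expect any genuine obstacle; the only mild subtlety is verifying that Remark~\ref{rem:important} indeed licenses the application to cosets $\gamma_i+H_i$ rather than to the subgroups $H_i$ themselves.
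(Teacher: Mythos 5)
Your proposal is correct and follows essentially the same route as the paper: apply Lemma~\ref{lem:fg} to trap the orbit in a finitely generated subgroup, use Vojta's theorem to write the intersection with $Y$ as a finite union of cosets, and then invoke Theorem~\ref{thm:main}~(ii) together with Remark~\ref{rem:important} for each coset. The coset subtlety you flag is exactly what Remark~\ref{rem:important} is there to handle, so there is no gap.
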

\begin{proof} By Lemma \ref{lem:fg} we have that the orbit of $c$ under $\Phi$ is contained in a finitely generated subgroup $\Gamma$ of $G$.  Then by Vojta's theorem \cite{Vojta1,Vojta2}, $\Gamma\cap Y$ is a finite union of cosets of subgroups of $\Gamma$, say $\{\gamma_i+\Gamma_i\colon i=1,\ldots ,m\}$; by construction $\Phi^n(c)\in Y$ if and only if $\Phi^n(c)\in \gamma_i+\Gamma_i$ for some $i$. Then by Theorem \ref{thm:main} (ii) and Remark \ref{rem:important} we have that the set of $n$ for which $\Phi^n(c)\in \gamma_i+\Gamma_i$ is a finite union of arithmetic progressions along with a finite set for $i=1,\ldots ,m$ and since such sets are closed under the process of taking finite unions, we obtain the desired result.  
\end{proof}

%%%%%%%%%%%%%%%%%%%%%%%%%%%%%%%%%%%%%%%%%%%%%%%%%%%%%%%%%%%%%%%%%%%%%%%%%%%%%%%
%%%%%%%%%%%%%%%%%%%%%%%%%%%%%%%%%%%%%%%%%%%%%%%%%%%%%%%%%%%%%%%%%%%%%%%%%%%%%%%

\section{Proof of Theorem~\ref{thm:main} (i)}
\label{sec:general}
Throughout this section we let $G$ be a semiabelian variety  and let $\Gamma$ be a finitely generated subgroup of $G$.  Given a rational self-map $\Psi$ of $G$ and $\beta\in G$ whose forward orbit under $\Psi$ is defined, we define the \emph{return set}
\begin{equation}
\label{eq:return_set}
R(G,\Psi,\beta,\Gamma):=\left\{n\in\N_0\colon \Psi^n(\beta)\in\Gamma\right\}.
\end{equation}
We proceed first with a useful reduction for our proof.

\begin{remark}
\label{rem:k-l}
Throughout our proof of Theorem~\ref{thm:main} we will often find useful to replace $(\alpha,\Phi)$ by $\left(\Phi^\ell(\alpha),\Phi^k\right)$. Indeed, given $k\in\N$, once we prove that for each $\ell=0,\dots, k-1$, the sets
$$R(G,\Phi^k,\Phi^\ell(\alpha),\Gamma):=\left\{n\in\N_0\colon \Phi^{kn}(\Phi^\ell(\alpha))\in \Gamma\right\}$$
is a union of finitely many arithmetic progressions along with a set of Banach density equal to $0$, then clearly also $R(G,\Phi,\alpha,\Gamma)$ is a union of finitely many arithmetic progressions along with a set of Banach density equal to $0$.
\end{remark}

%%%%%%%%%%%%%%%%%%%%%%%%%%%%%%%%%%%%%%%%%%%%%%%%%%%%%%%%%%%%%%%%%%%%%%%%%%%%%%%

\subsection{General setup}
\label{subsec:general}

We have a rational self-map $\Phi$ on a semiabelian variety $G$ defined over an algebraically closed field $K$. In particular, there exists a short exact sequence of algebraic groups (see~\eqref{eq:short_exact_0})
\begin{equation}
\label{eq:short_sequence}
1\lra \bG_m^N\lra G\lra A\lra 1, 
\end{equation}
where $A$ is an abelian variety; we denote by $\pi:G\lra A$ the projection from \eqref{eq:short_sequence}. Also, if $\pi$ were an isomorphism (i.e., $N=0$ in \eqref{eq:short_sequence}), then the rational self-map $\Phi$ would actually be regular and so, we would be done by Theorem~\ref{thm:main} (ii) proven in Section~\ref{sec:special}. Thus we may assume henceforth that $N$ is a positive integer (and also identify the maximal algebraic torus inside $G$ with $\bG_m^N$). 

%Since $\Phi$ is regular on a dense open subset of $G$, then there exists $\gamma\in G$ such that $\Phi$ is regular on a dense open subset of $\gamma+\bG_m^N$. At the expense of replacing $\Phi$ by its conjugate through the translation-by-$\gamma$ on $G$, and also replacing  by \gamma$, we may assume $\Phi$ is regular on a dense open subset of $\bG_m^N$. As an aside, this is the reason why for the general case of Theorem~\ref{thm:main} we cannot assume a priori that $=0$ as we did in its special case settled in Section~\ref{sec:special}.

Now, because the only rational maps $\mathbb{A}^1\dashrightarrow A$ are constant (since $A$ is an abelian variety), we see that $\Phi$ induces a rational self-map $\bar{\Phi}$ on $A$, i.e., $\pi\circ \Phi = \bar{\Phi}\circ \pi$. 
On the other hand, since any rational self-map on an abelian variety is regular, we get that $\bar{\Phi}$ is a regular self-map on $A$.

%%%%%%%%%%%%%%%%%%%%%%%%%%%%%%%%%%%%%%%%%%%%%%%%%%%%%%%%%%%%%%%%%%%%%%%%%%%%%%

\subsection{A linear recurrence relation}
\label{subsec:lrs_abelian}

For the  regular self-map $\bar{\Phi}$ of the abelian variety $A$, arguing identically as in Section \ref{sec:special} (see equation~\eqref{recurrence 2}), we obtain that there exists a positive integer $m\le 2\dim(A)+1$ with the property that there exist integers $c_1,\dots,c_m$  such that
\begin{equation}
\label{eq:recurrence_22}
\bar{\Phi}^{m}=\sum_{j=1}^m c_j\bar{\Phi}^{m-j}. 
\end{equation} 
We derive a relation such as \eqref{eq:recurrence_22} by noting that $\bar{\Phi}$ is the composition of a translation with a group endomorphism of $A$. For any group endomorphism of $A$ there exists a recurrence relation of the form~\eqref{eq:recurrence_22} with $m\le 2\dim(A)$. Then, arguing as in the transition from Equation~\eqref{recurrence 2-0} to Equation~\eqref{recurrence 2}, we obtain a recurrence relation of the form~\eqref{eq:recurrence_22} for $\bar{\Phi}$ with $m\le 1+2\dim(A)$.

We define a group endomorphism $\Psi:G^m\lra G^m$ as follows
\begin{equation}
\label{eq:def_psi}
\Psi(y_1,\dots, y_m)=\left(y_2,\dots, y_m,\sum_{i=1}^m c_jy_{m+1-j}\right).
\end{equation}
We also let $\tilde{\pi}:G^m\lra A^m$ be defined as 
$$\tilde{\pi}(y_1,\dots, y_m)=\left(\pi(y_1),\dots, \pi(y_m)\right).$$
Then letting $\bar{x}:=\pi(x)$ for each $x\in G(K)$, and also using Equations~\eqref{eq:def_psi} and \eqref{eq:recurrence_22}, we obtain that for each $n\in\N_0$, we have that 
\begin{equation}
\label{eq:prop_psi}
\left(\tilde{\pi}\circ \Psi^n\right)\left(\alpha, \Phi(\alpha),\dots, \Phi^{m-1}(\alpha)\right) = \left(\bar{\Phi}^n(\bar{\alpha}),\bar{\Phi}^{n+1}(\bar{\alpha}),\dots, \bar{\Phi}^{n+m-1}(\bar{\alpha})\right). 
\end{equation}
For the sake of simplifying our notation later, we denote:
$$\bar{\alpha}_i:=\bar{\Phi}^i(\bar{\alpha})\text{ for }i=0,\dots, m-1.$$

%%%%%%%%%%%%%%%%%%%%%%%%%%%%%%%%%%%%%%%%%%%%%%%%%%%%%%%%%%%%%%%%%%%%%%%%%%%%%%

\subsection{Another finitely generated subgroup}
\label{subsec:another_f.g.}

We let $\beta_0,\dots, \beta_{m-1}\in G(K)$ such that 
\begin{equation}
\label{eq:y_i}
\pi(\beta_i)=\bar{\alpha}_i\text{ for }i=0,\dots, m-1.
\end{equation}
We let $\Gamma_1$ be the finitely generated subgroup of $G(K)$ spanned by $\Gamma$ along with $\beta_0,\dots, \beta_{m-1}$.
\begin{remark}
\label{rem:y}
The subgroup $\Gamma_1$ depends on the choice for $\beta_0,\dots, \beta_{m-1}$ verifying equation~\eqref{eq:y_i} (one obvious choice is $\beta_i=\Phi^i(\alpha)$ for $i=0,\dots, m-1$, but there are infinitely many possibilities for the $\beta_i$'s since $N\ge 1$ and thus, $\pi:G\lra A$ has infinite kernel). We will prove that (regardless of the choice for the $\beta_i$'s) the set 
\begin{equation}
\label{eq:R_1}
R_1:=R(G,\Phi,\alpha,\Gamma_1):=\left\{n\in\N_0\colon \Phi^n(\alpha)\in \Gamma_1\right\}
\end{equation}
is a union of finitely many arithmetic progressions along with a set of Banach density equal to $0$. Then working with two different liftings $\beta_0,\dots, \beta_{m-1}$ and $\beta_0',\dots, \beta_{m-1}'$ of $\bar{\alpha}_0,\dots,\bar{\alpha}_{m-1}$ (which satisfy the hypotheses of Proposition~\ref{prop:f.g.}) which generate two subgroups $\Gamma_1$ and $\Gamma_1'$ for which $\Gamma=\Gamma_1\cap\Gamma_1'$ will allow us to derive the desired conclusion in Theorem~\ref{thm:main} (i).
\end{remark}

In Section~\ref{subsec:R_1_reformulated} we reformulate the desired conclusion for the set $R_1$ from Equation~\eqref{eq:R_1} in terms of a new finitely generated subgroup of $\bG_m^N$.

%%%%%%%%%%%%%%%%%%%%%%%%%%%%%%%%%%%%%%%%%%%%%%%%%%%%%%%%%%%%%%%%%%%%%%%%%%%%%%%

\subsection{Reformulating the desired conclusion for our new finitely generated subgroup}
\label{subsec:R_1_reformulated}

We consider the following rational self-map $\tilde{\Phi}:G^{m+1}\dra G^{m+1}$ given by
\begin{equation}
\label{eq:def_tilde_phi}
\tilde{\Phi}(x,y_1,\dots, y_m):=\left(\Phi(x),\Psi(y_1,\dots, y_m)\right).
\end{equation}
Then using Equation~\eqref{eq:prop_psi} we see that for each $n\in\N_0$, we have that 
\begin{equation}
\label{eq:def_beta_n}
\tilde{\Phi}(\alpha,\beta_0,\dots, \beta_{m-1}):=\left(\Phi^n(\alpha),\beta_n,\dots, \beta_{n+m-1}\right)
\end{equation}
for some points $\beta_n\in G(K)$, where (very importantly) due to Equations \eqref{eq:prop_psi} and \eqref{eq:y_i}, we have that for each $n\in\N_0$:  
\begin{equation}
\label{eq:prop_beta_n}
\pi\left(\Phi^n(\alpha)\right)=\pi(\beta_n). 
\end{equation}
Letting $\Theta:G^{m+1}\lra G^{m+1}$ be the group endomorphism defined by the rule
\begin{equation}
\label{eq:def_theta}
\Theta(x,y_1,\dots, y_m):=(x-y_1,y_1,\dots, y_m),
\end{equation}
we have that for each $n\in\N_0$,
\begin{equation}
\label{eq:prop_theta}
\left(\Theta\circ \tilde{\Phi}^n\right)\left(\alpha,\beta_0,\dots, \beta_{m-1}\right)\in \bG_m^N(K)\times G^m(K).
\end{equation}

We let $Y\subset G^{m+1}$ be the Zariski closure of the orbit of $\tilde{\alpha}:=(\alpha,\beta_0,\dots, \beta_{m-1})$ under the action of $\tilde{\Phi}$; clearly, $\tilde{\Phi}(Y)\subseteq Y$. For the sake of not complicating the notation, we let $\tilde{\Phi}$ also denote the induced rational self-map on $Y$. 

Using Equation~\eqref{eq:prop_theta} along with the fact that $Y$ is the Zariski closure of the orbit of $\alpha$ under $\tilde{\Phi}$ on which the property~\eqref{eq:prop_theta} holds, we conclude that $\Theta(Y)\subseteq \bG_m^N\times G^m$.  Therefore, for each $i=1,\dots, N$, we have well-defined projection maps $\pi_i:\Theta(Y)\dra \bG_m$ onto each one of the $N$ coordinates of $\bG_m^N$.

We now consider the intersection 
\begin{equation}
\label{eq:coset_G_m}
H:=\Gamma_1\cap \bG_m^N(K),
\end{equation}
and let $\tilde{\pi}:\bG_m^N\times G^m\lra \bG_m^N$ be the natural projection. 

\begin{lemma}
\label{lem:the_same}
Adopt the notation above.  Then $R_1=R(G,\Phi,\alpha,\Gamma_1)$ is equal to the set 
\begin{equation}
\label{eq:new_return}
\tilde{R}_1:=\left\{n\in\N_0\colon \left(\tilde{\pi}\circ \Theta\circ \tilde{\Phi}^n\right)\left(\alpha, \beta_0,\dots, \beta_{m-1}\right)\in H\right\}.
\end{equation}
\end{lemma}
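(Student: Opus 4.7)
The plan is to directly compute $(\tilde{\pi}\circ \Theta\circ \tilde{\Phi}^n)(\alpha,\beta_0,\dots,\beta_{m-1})$ and to observe that the membership conditions defining $R_1$ and $\tilde{R}_1$ are tautologically equivalent, once one knows that each $\beta_n$ already lies in $\Gamma_1$.

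First I would unwind the composition. By Equation~\eqref{eq:def_beta_n},
\[
\tilde{\Phi}^n(\alpha,\beta_0,\dots,\beta_{m-1})=(\Phi^n(\alpha),\beta_n,\dots,\beta_{n+m-1}),
\]
so applying $\Theta$ (see \eqref{eq:def_theta}) and then the projection $\tilde{\pi}$ yields
\[
\bigl(\tilde{\pi}\circ \Theta\circ \tilde{\Phi}^n\bigr)(\alpha,\beta_0,\dots,\beta_{m-1})=\Phi^n(\alpha)-\beta_n.
\]
The compatibility relation \eqref{eq:prop_beta_n} guarantees that this element always lives in $\ker(\pi)=\bG_m^N(K)$, so the membership condition for $\tilde{R}_1$ reduces to $\Phi^n(\alpha)-\beta_n\in \Gamma_1$ (recalling that $H=\Gamma_1\cap \bG_m^N(K)$).

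The key point is then that $\beta_n\in\Gamma_1$ for every $n\in\N_0$. This is because $\Psi$ is a pure group endomorphism of $G^m$ (no translation component), so $\Psi^n(\beta_0,\dots,\beta_{m-1})$ is an integer linear combination of the coordinates $\beta_0,\dots,\beta_{m-1}$. Since $\Gamma_1$ was defined to contain these generators, each $\beta_n$ belongs to $\Gamma_1$.

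With this in hand, both inclusions are immediate: if $\Phi^n(\alpha)\in\Gamma_1$ then $\Phi^n(\alpha)-\beta_n\in\Gamma_1\cap\bG_m^N=H$, showing $R_1\subseteq \tilde{R}_1$; and conversely, if $\Phi^n(\alpha)-\beta_n\in H\subseteq\Gamma_1$, then adding $\beta_n\in\Gamma_1$ yields $\Phi^n(\alpha)\in\Gamma_1$, showing $\tilde{R}_1\subseteq R_1$. The only subtlety, and perhaps the mildest obstacle, is making sure that the recursive definition of the $\beta_n$ via $\Psi$ really does stay within $\Gamma_1$ -- which is why it is critical that $\Psi$ is a genuine group endomorphism rather than a translation composed with one, so that no extra ``translation'' elements get introduced that could escape $\Gamma_1$.
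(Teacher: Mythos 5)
Your proposal is correct and follows essentially the same route as the paper's own proof: you unwind the composition to get $\Phi^n(\alpha)-\beta_n$, observe via \eqref{eq:prop_beta_n} that this lies in $\bG_m^N(K)$, and use the fact that each $\beta_n$ is an integer linear combination of $\beta_0,\dots,\beta_{m-1}$ (hence lies in $\Gamma_1$) to reduce membership in $\Gamma_1$ to membership in $H$. No gaps.
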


\begin{proof}
We recall that $\Psi^n(\beta_0,\dots, \beta_{m-1})=(\beta_n,\dots, \beta_{n+m-1})$ for each $n\ge 0$ and so from the definition of $\Psi$ we have that $\beta_n$ is contained in the linear span of $\beta_0,\dots, \beta_{m-1}$.  Hence $\beta_n\in\Gamma_1$ for all $n$. Therefore, we have that $\Phi^n(\alpha)\in \Gamma_1$ if and only if $(\Phi^n(\alpha)-\beta_n)\in\Gamma_1$. Furthermore, since $\Phi^n(\alpha)-\beta_n\in \bG_m^N(K)$ (see equation~\eqref{eq:prop_beta_n}), then $(\Phi^n(\alpha)-\beta_n)\in \Gamma_1$ if and only if $\Phi^n(\alpha)-\beta_n\in H$, as desired.
\end{proof}

In Section~\ref{subsec:in_R_1} we show that the set $R_1$ from Equation~\eqref{eq:R_1} (which is the same as the set $\tilde{R}_1$ from \eqref{eq:new_return}, as shown in Lemma~\ref{lem:the_same}) is indeed a union of finitely many arithmetic progressions along with a set of Banach density equal to $0$.

%%%%%%%%%%%%%%%%%%%%%%%%%%%%%%%%%%%%%%%%%%%%%%%%%%%%%%%%%%%%%%%%%%%%%%%%%%%%%%%%

\subsection{Deriving the conclusion for the new finitely generated subgroup}
\label{subsec:in_R_1}

Throughout this section we let $H$ be as in Equation~\eqref{eq:coset_G_m}.
%,  we write
%\begin{equation}
%\label{eq:delta_0}
%\tilde{h}^{-1}:=(u_1,\dots, u_N)\in \bG_m^N(K);
%\end{equation}   
Then there exists a finitely generated subgroup $E\subset \bG_m(K)$ such that $H\subseteq E^N$ (we can, for example, take $E$ to be the subgroup of $K^*$ spanned by all the entries of the generators of $H$). 

Next, for each $i=1,\dots, N$, we let $f_i:Y\dashrightarrow \bG_m$ be the rational map
\begin{equation}
\label{eq:f_i}
f_i:=\pi_i\circ \Theta.
\end{equation} 
Now, \cite[Theorem~1.1]{BCE} and \cite[Theorem~1.1]{BG-2} yield that for each $i=1,\dots, N$, the set 
\begin{equation}
\label{eq:R_i}
U_i:=\left\{n\in\N_0\colon f_i\left(\tilde{\Phi}^n\left(\tilde{\alpha}\right)\right)\in E\right\}
\end{equation}
is a union of finitely many arithmetic progressions along with a set of Banach density equal to $0$. Therefore, using that intersections of finite unions of arithmetic progressions is also a finite union of arithmetic progressions, along with the fact that $H\subseteq E^N$, then we see that 
\begin{equation}
\label{eq:in_intersection}
R_1\subseteq \tilde{U}:=\bigcap_{i=1}^N U_i 
\end{equation}
and moreover, $\tilde{U}$ is  a union of finitely many arithmetic progressions along with a set of Banach density equal to $0$. 

\begin{remark}
\label{rem:general_variety}
The variety $Y$ is not necessarily irreducible. However, it has finitely many irreducible components $Y_j$ and an iterate $\tilde{\Phi}^\ell$ of $\tilde{\Phi}$ induces rational self-maps on each irreducible component of $Y$. Hence the result of \cite{BG-2} can be applied to each $(Y_j,\tilde{\Phi}^\ell)$, which still allows us to derive the desired conclusion about each $U_i$ (see also Remark~\ref{rem:k-l}). Also, note that the results of \cite{BCE} (which were then extended in \cite{BG-2} for fields of arbitrary characteristic) are written for arbitrary varieties, not necessarily irreducible. Therefore, even when $Y$ is not irreducible, we still obtain the desired description of each $U_i$ from Equation~\eqref{eq:in_intersection} as a union of finitely many arithmetic progressions along with a set of Banach density equal to $0$. 
\end{remark}

Next we fix such an (infinite) arithmetic progressions $\{nk+\ell\}_{n\in\N_0}$ appearing in $\tilde{U}$ (for some given $k\in\N$ and $\ell\in\N_0$). Recalling the definition of the $\beta_n$'s as in Equation~\eqref{eq:def_beta_n} (see also equation~\eqref{eq:prop_beta_n}), we know that 
\begin{equation}
\label{eq:like_Gm}
\Phi^{nk+\ell}(\alpha)-\beta_{nk+\ell}\in  E^N\text{ for each }n\in\N_0.
\end{equation} 

\begin{lemma}
\label{lem:lrs_group}
Let $k$ and $\ell$ be as in Equation~\eqref{eq:like_Gm}. Then the  set of integers of the form $nk+\ell$ with the property that $\Phi^{nk+\ell}(\alpha)\in \Gamma_1$ is a union of finitely many arithmetic progressions along with a set of Banach density equal to $0$.
\end{lemma}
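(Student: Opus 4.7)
First I would use Remark~\ref{rem:k-l} to reduce to the case $k=1$, $\ell=0$, so that $\omega_n := \Phi^n(\alpha) - \beta_n \in E^N$ for every $n \ge 0$, and by Lemma~\ref{lem:the_same} it suffices to show that $\{n : \omega_n \in H\}$ is a finite union of arithmetic progressions together with a set of Banach density zero. Since both $H$ and $E^N$ are finitely generated abelian groups, the quotient $E^N/H$ decomposes by the structure theorem as $\bZ^s \oplus F$ with $F$ finite, so that $\omega_n \in H$ is equivalent to the vanishing of the image of $\omega_n$ in each of the two summands.

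The main step is to rephrase membership in $H$ as a finite conjunction of conditions of the form $\chi(\omega_n) \in F'$ for monomial characters $\chi : \bG_m^N \to \bG_m$ defined over $\bZ$ and finitely generated subgroups $F' \subset \bG_m(K)$. Fixing generators $e_1, \dots, e_r$ of the free part of $E$, each $\omega \in E^N$ has a well-defined exponent vector $(a_{i,j}(\omega))_{i,j} \in \bZ^{Nr}$ modulo torsion, and for any $\vec d \in \bZ^N$ the monomial $\chi_{\vec d}(\omega) = \prod_j \omega_j^{d_j}$ has $e_i$-exponent equal to $\sum_j d_j \, a_{i,j}(\omega)$. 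In particular, the requirement $\chi_{\vec d}(\omega_n) \in \langle e_{i'} : i' \neq i\rangle$ is equivalent to the single linear relation $\sum_j d_j \, a_{i,j}(\omega_n) = 0$. Choosing enough such monomials and target subgroups, and handling the finite factor $F$ together with congruence conditions by passing to finitely many further sub-APs via Remark~\ref{rem:k-l}, one captures every equation and congruence cutting out $H$ inside $E^N$ as a condition of the desired form.

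Each of these conditions can be written as $g(\tilde\Phi^n(\tilde\alpha)) \in F'$ for the rational function $g := \chi \circ \tilde\pi \circ \Theta$ on $Y$, so by \cite[Theorem~1.1]{BCE} and \cite[Theorem~1.1]{BG-2} (together with Remark~\ref{rem:general_variety} to deal with the possible reducibility of $Y$) the corresponding set of $n$ is a finite union of arithmetic progressions together with a set of Banach density zero. Since the class of such sets is closed under finite intersection, intersecting over our finite list of conditions yields the desired conclusion. The main obstacle is the linear-algebraic bookkeeping required to translate the abstract sublattice-and-congruence description of $H \subseteq E^N$ into the specific ``monomial character into finitely generated subgroup of $\bG_m(K)$'' format that the BCE/BG-2 framework can ingest, with the torsion summand $F$ being the most delicate piece, handled by subdividing the progression using Remark~\ref{rem:k-l} before applying the character scheme above.
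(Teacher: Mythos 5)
Your overall architecture (reduce via Lemma~\ref{lem:the_same} to testing $\omega_n:=\Phi^n(\alpha)-\beta_n\in H$, then feed conditions into the BCE/BG--2 machinery and intersect) is reasonable, but the central step --- that ``one captures every equation and congruence cutting out $H$ inside $E^N$'' as a finite conjunction of conditions $\chi_{\vec d}(\omega_n)\in F'$ --- is false for a general subgroup $H\subseteq E^N$, and this is where the proof breaks. Writing $\omega=(\omega_1,\dots,\omega_N)$ with $\omega_j\equiv\prod_i e_i^{a_{i,j}}$ modulo torsion, a single condition $\prod_j\omega_j^{d_j}\in F'$ imposes on the exponent matrix $(a_{i,j})$ only linear relations of the decomposable form $\sum_{i,j}c_i d_j a_{i,j}=0$ (a fixed $\vec d$ tensored with the forms cutting out $F'\cap E$ inside $E$); a finite conjunction therefore cuts out a subgroup whose space of defining relations is spanned by rank-one matrices $c\otimes\vec d$. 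The relation space of a general $H$ need not be so spanned: take $N=2$, $E=\langle v_1,v_2\rangle$ free of rank $2$, and $H=\{(v_1^{a}v_2^{b},\,v_1^{c}v_2^{d})\colon a+d=0\}$. Its relation space is the line spanned by the identity matrix, which contains no nonzero rank-one element, so no finite conjunction of your conditions can cut out $H$. The same obstruction undermines your treatment of the finite factor $F$: asserting that the image of $\omega_n$ in $F$ can be controlled ``by passing to sub-APs'' is precisely the eventual periodicity you are trying to establish, and it does not follow from Theorem~1.1 alone.

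The missing ingredient is that the paper invokes \cite[Corollary~1.3]{BCE} and \cite[Corollary~1.3]{BG-2}, not merely the Theorems~1.1 of those papers: along the arithmetic progression $\{nk+\ell\}$ where \eqref{eq:like_Gm} holds, these corollaries give the explicit $S$-unit representation $\omega_{nk+\ell}=\bigl(\prod_i v_i^{b_i^{(j)}(n)}\bigr)_{1\le j\le N}$ with each exponent $b_i^{(j)}(n)$ an integer linear recurrence in $n$ (after splitting into finitely many further sub-progressions, which is where Remark~\ref{rem:k-l} legitimately enters). Hence $\{\omega_{nk+\ell}\}_n$ is a linear recurrence sequence inside the finitely generated group $E^N$, and membership in the \emph{arbitrary} subgroup $H$ is then decided by the Skolem--Mahler--Lech theorem in the form of Propositions~\ref{ever 3} and~\ref{SML fin gen}, yielding in fact a finite union of arithmetic progressions together with a finite set. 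Without this linear-recurrence structure on the exponents, Theorem~1.1 only lets you test membership in subgroups of $\bG_m$ one character at a time, which, as the example above shows, cannot detect a general $H$.
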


\begin{proof}
First, we recall (as shown in Lemma~\ref{lem:the_same}) that $\Phi^{nk+\ell}(\alpha)\in \Gamma_1$ if and only if $\Phi^{nk+\ell}(\alpha)-\beta_{nk+\ell}\in  H$. 

On the other hand, using Equation~\eqref{eq:like_Gm} along with \cite[Corollary~1.3]{BCE} and \cite[Corollary~1.3]{BG-2}  yields that for some generators $v_1,\dots, v_s$ of $E$ (and also at the expense of replacing the arithmetic progressions $\{nk+\ell\}$ by finitely many arithmetic progressions, which is again admissible due to Remark~\ref{rem:k-l}), we can write
\begin{equation}
\label{eq:Gm_0}
\Phi^{nk+\ell}(\alpha)-\beta_{nk+\ell}=\left( \prod_{i=1}^s v_i^{b^{(j)}_i(n)}\right)_{1\le j\le N},
\end{equation}
where each sequence $\{b_i^{(j)}(n)\}$ is a linear recurrence sequence of integers. Applying Proposition~\ref{ever 3} to the linear recurrence sequence  
$$\left(\prod_{i=1}^s  v_i^{b^{(j)}_i(n)}\right)_{1\le j\le N}$$
with respect to the subgroup $H$ of $E^N$ yields that the set of $n\in\N_0$ for which $\Phi^{nk+\ell}(\alpha)-\beta_{nk+\ell}\in H$ is a union of finitely many arithmetic progressions along with a finite set. 
\end{proof}

Lemma~\ref{lem:lrs_group} yields that the set $R_1=\tilde{R}_1$ (see also Lemma~\ref{lem:the_same}) is a finite union of arithmetic progressions along with a set of Banach density equal to $0$.

%%%%%%%%%%%%%%%%%%%%%%%%%%%%%%%%%%%%%%%%%%%%%%%%%%%%%%%%%%%%%%%%%%%%%%%%%%%%%%%%

\subsection{Conclusion of the proof of Theorem \ref{thm:main} (i)}

We have shown that the set $R_1=R(G,\Phi,\alpha,\Gamma_1)$ is a union of finitely many arithmetic progressions along with a set of Banach density equal to $0$. 

Next, we let $\epsilon_0,\dots,\epsilon_{m-1}\in \bG_m^N(K)\subseteq G(K)$ be elements that are linearly independent with respect to $\Gamma_1$; the existence of such points $\epsilon_0,\dots, \epsilon_{m-1}$ is guaranteed by the fact that $$\dim_{\Q}\bG_m^N(K)\otimes_{\Z}\Q=\infty.$$ 
Then we let $\beta_i':=\beta_i+\epsilon_i$ for each $i=0,\dots, m-1$. We let $\Gamma_1'$ be the finitely generated subgroup spanned by $\Gamma$ along with $\beta_0',\dots, \beta_{m-1}'$. Then Proposition~\ref{prop:f.g.} yields that 
\begin{equation}
\label{eq:in_Gamma}
\Gamma_1\cap\Gamma_1'=\Gamma.
\end{equation} 
Applying the exact same argument as before to $R_1':=R(G,\Phi,\alpha,\Gamma_1')$, we conclude that $R_1'$ is also a union of finitely many arithmetic progressions along with a set of Banach density equal to $0$. But then using Equation~\eqref{eq:in_Gamma}, we get that $R=R(G,\Phi,\alpha,\Gamma)$ equals $R_1\cap R_1'$ and since intersections of finitely many arithmetic progressions is a union of finitely many arithmetic progressions, then we obtain the desired conclusion in Theorem~\ref{thm:main}.

%%%%%%%%%%%%%%%%%%%%%%%%%%%%%%%%%%%%%%%%%%%%%%%%%%%%%%%%%%%%%%%%%%%%%%%%%%%%%%%
%%%%%%%%%%%%%%%%%%%%%%%%%%%%%%%%%%%%%%%%%%%%%%%%%%%%%%%%%%%%%%%%%%%%%%%%%%%%%%%

\end{document}